\definecolor{darkred}{rgb}{0.6,0.1,0.1}
\definecolor{darkred2}{rgb}{0.6,0,0}
\definecolor{darkblue}{rgb}{0.1,0.1,0.4}
\definecolor{darkgrey}{rgb}{0.5,0.5,0.5}
\numberwithin{equation}{section}
\theoremstyle{plain}
\newtheorem{thm}{Theorem}[section]
\newtheorem{lem}[thm]{Lemma}
\newtheorem{prop}[thm]{Proposition}
\newtheorem*{cor*}{Corollary}
\newtheorem*{thmA}{Theorem A}
\theoremstyle{remark}
\newtheorem{example}[thm]{Example}
\theoremstyle{plain}
\theoremstyle{definition}
\newtheorem{dfn}[thm]{Definition}
\renewcommand{\Re}{{\mathrm{Re}}}
\newcommand{\be}{\begin{equation}}
\newcommand{\ee}{\end{equation}}
\newcommand{\beu}{\begin{equation*}}
\newcommand{\eeu}{\end{equation*}}
\newcommand{\besu}{\begin{equation*}
\begin{aligned}}
\newcommand{\eesu}{\end{aligned}
\end{equation*}}
\newcommand{\bes}{\begin{equation}
\begin{aligned}}
\newcommand{\ees}{\end{aligned}
\end{equation}}
\newcommand\fra{\mathfrak a}
\newcommand\ov{\overline}
\newcommand\wt{\widetilde}
\newcommand\sess{\sigma_{\rm ess}}
\newcommand\void[1]{}
\def\sess{\sigma_{\rm ess}}
      \def\dC{{\mathbb C}}
   \def\dN{{\mathbb N}}   
      \def\dR{{\mathbb R}}
\newcommand{\dom}{\mathrm{dom}\,}
\begin{document}
\title[An eigenvalue inequality for Schr\"odinger operators]{An eigenvalue inequality for Schr\"odinger operators
with {\boldmath$\delta$} and {\boldmath$\delta'$}-interactions supported on hypersurfaces}

\author{Vladimir Lotoreichik}

\author{Jonathan Rohleder}

\address{Institut f\"{u}r Numerische Mathematik\\
Technische Universit\"{a}t Graz\\
 Steyrergasse 30, A 8010 Graz}
\email{lotoreichik@math.tugraz.at \and rohleder@tugraz.at}

\begin{abstract}
We consider self-adjoint Schr\"odinger operators in~$L^2 (\dR^d)$
with a $\delta$-interaction of strength~$\alpha$ and a $\delta'$-interaction of strength $\beta$, respectively, supported on a hypersurface, where $\alpha$ and $\beta^{-1}$ are bounded, real-valued functions. It is known that the inequality~$0 < \beta \leq 4/\alpha$ implies inequality of the eigenvalues of 
these two operators below the bottoms of the essential spectra. We show that this eigenvalue inequality is strict whenever~$\beta < 4 / \alpha$ on a nonempty, open subset of the hypersurface. Moreover, we point out special geometries of the interaction support, such as broken lines or infinite cones, for which strict inequality of the eigenvalues even holds in the borderline case $\beta = 4 / \alpha$.
\end{abstract}

\keywords{$\delta$ and $\delta'$-interactions on a hypersurface, discrete spectrum, eigenvalue inequality.} 

\maketitle

\section{Introduction}

Schr\"odinger operators with $\delta$ and $\delta'$-interactions supported on hypersurfaces have attracted considerable attention in recent years, see the review paper~\cite{E08} and, e.g.,~\cite{BEL13, EI01, EJ13, EP14}, as well as~\cite{BEL14,BEW09,CDR08,DR13,EN03,L13} for interactions supported on hypersurfaces with special geometries. In this note we focus on the self-adjoint Schr\"odinger operators~$-\Delta_{\delta,\alpha}$ and~$-\Delta_{\delta',\beta}$ in $L^2(\dR^d)$, $d \geq 2$, which are formally given by
\begin{align*}
 - \Delta_{\delta, \alpha} = - \Delta - 
 \alpha \langle \cdot, \delta_\Sigma  \rangle \delta_\Sigma \quad \text{and} \quad - \Delta_{\delta', \beta} = - \Delta - \beta \langle \cdot,\delta'_\Sigma \rangle \delta'_\Sigma,
\end{align*}
where $\Delta$ is the Laplacian and the support $\Sigma$ of the interactions is a Lipschitz hypersurface; we emphasize that $\Sigma$ is not required to be compact or connected, see Section~\ref{sec:weak} for the details. These operators can be defined rigorously, e.g., via quadratic forms, as is indicated in Section~\ref{sec:def} below. We assume that the strengths $\alpha$ and $\beta$ of the interactions are real-valued functions on $\Sigma$ with $\alpha, \beta^{-1} \in L^\infty (\Sigma)$. 

Let us denote by $\sigma_{\rm ess} (- \Delta_{\delta, \alpha})$ and $\sigma_{\rm ess} (- \Delta_{\delta', \beta})$ the essential spectra of $- \Delta_{\delta, \alpha}$ and $- \Delta_{\delta', \beta}$, respectively. Moreover, let
\[
\lambda_1(-\Delta_{\delta,\alpha}) \le 
\lambda_2(-\Delta_{\delta,\alpha})\le \dots <
\inf \sess (-\Delta_{\delta, \alpha})
\]
and 
\[
\lambda_1(-\Delta_{\delta',\beta}) \le 
\lambda_2(-\Delta_{\delta',\beta})\le \dots <
\inf \sess (-\Delta_{\delta', \beta})
\]
be the eigenvalues of $- \Delta_{\delta, \alpha}$ and $- \Delta_{\delta', \beta}$, respectively, below the bottom of the essential spectrum, counted with multiplicities; for many choices of~$\Sigma$ the existence of such eigenvalues has been proved, see e.g. \cite{BEL13, BEL14, BEW09, EI01, EK03}. 

In~\cite[Theorem~3.6]{BEL13} for $0 < \beta \leq \frac{4}{\alpha}$ the operator inequality
\begin{align*}
 U^{-1} \big( - \Delta_{\delta', \beta} \big) U \leq - \Delta_{\delta, \alpha}
\end{align*}
was established, where $U$ is a unitary transformation in $L^2 (\dR^d)$; cf.~\eqref{U} below. This implies $\inf \sess (- \Delta_{\delta', \beta}) \leq \inf \sess(-\Delta_{\delta,\alpha})$ as well as 
\begin{align*}
 \lambda_n (-\Delta_{\delta', \beta}) \leq \lambda_n (-\Delta_{\delta,\alpha})
\end{align*}
for all $n \in \dN$ such that $\lambda_n (- \Delta_{\delta, \alpha}) < \inf \sess (- \Delta_{\delta', \beta})$.
The aim of this note is to sharpen the latter inequality as follows.

\begin{thmA}\label{thm}
Let $0 < \beta \leq 4 / \alpha$ and assume that $\beta |_\sigma < 4 / \alpha |_\sigma$ on a nonempty, open set $\sigma \subset \Sigma$. Then
\[
\lambda_n(-\Delta_{\delta', \beta}) < \lambda_n(-\Delta_{\delta,\alpha})
\]
holds for all $n \in \dN$ such that $\lambda_n (- \Delta_{\delta, \alpha}) < \inf \sess (- \Delta_{\delta', \beta})$.
\end{thmA}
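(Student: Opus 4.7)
The plan is to combine the form inequality of \cite[Theorem~3.6]{BEL13}---read as an identity---with the min--max principle and a perturbation argument driven by unique continuation, exploiting that $\dom(\mathfrak{a}_{\delta',\beta})=H^1(\dR^d\setminus\Sigma)$ is strictly larger than $U\bigl(H^1(\dR^d)\bigr)$.

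Inspection of the proof of \cite[Theorem~3.6]{BEL13} reveals that $U$ is multiplication by $\mathbf{1}_{\Omega_+}-\mathbf{1}_{\Omega_-}$ on the two sides of $\Sigma$ and that, for every $v\in H^1(\dR^d)$, one actually has the explicit identity
\[
\mathfrak{a}_{\delta,\alpha}[v]-\mathfrak{a}_{\delta',\beta}[Uv]=\int_{\Sigma}\Bigl(\tfrac{4}{\beta}-\alpha\Bigr)\bigl|v|_\Sigma\bigr|^2\,d\sigma\eqdef G[v]\ge 0,
\]
which is strictly positive as soon as $v|_\sigma\not\equiv 0$. I would then argue by contradiction: suppose $\lambda_n(-\Delta_{\delta',\beta})=\lambda_n(-\Delta_{\delta,\alpha})\eqdef\mu$, let $m$ be the number of eigenvalues of $-\Delta_{\delta,\alpha}$ strictly less than $\mu$, and let $v_1,\dots,v_n$ be orthonormal eigenfunctions with $v_{m+1},\dots,v_n$ in the $\mu$-eigenspace $E_\mu$. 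Applying the min--max principle to $-\Delta_{\delta',\beta}$ with the test subspace $UL$, $L=\spn(v_1,\dots,v_n)$, together with the above identity forces the maximum of the Rayleigh quotient on $UL$ to equal $\mu$, attained at some nonzero $v^*\in\spn(v_{m+1},\dots,v_n)\subset E_\mu$ with $G[v^*]=0$, i.e.\ $v^*|_\sigma\equiv 0$.

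The $\delta$-transmission condition $[\partial_\nu v^*]|_\Sigma=-\alpha v^*|_\Sigma$ combined with $v^*|_\sigma=0$ yields $[\partial_\nu v^*]|_\sigma=0$, so $v^*$ extends as a real-analytic solution of $(-\Delta-\mu)v^*=0$ across the interior of $\sigma$, with a well-defined classical normal derivative $\partial_\nu v^*|_\sigma$. I next claim $\partial_\nu v^*|_\sigma\not\equiv 0$: otherwise $v^*$ would have zero Cauchy data on a piece of $\sigma$, so Holmgren's theorem would give $v^*\equiv 0$ in a neighbourhood of $\sigma$, and iterating through the connected components of $\dR^d\setminus\Sigma$ by the transmission condition (which carries zero Cauchy data across $\Sigma$) would force $v^*\equiv 0$ globally, contradicting $v^*\ne 0$. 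Now pick $\phi\in H^1(\Omega_+)$ supported in a small neighbourhood of a point of $\sigma$ and extend it by zero to $\Omega_-$, so that $\phi\in\dom(\mathfrak{a}_{\delta',\beta})\setminus\ran U$; an integration by parts using $v^*|_\sigma=0$ and $(-\Delta-\mu)v^*=0$ on $\Omega_+$ gives that $\mathfrak{a}_{\delta',\beta}(Uv^*,\phi)-\mu\,\langle Uv^*,\phi\rangle$ equals, up to sign, $\int_\sigma\partial_\nu v^*\,\overline{\phi|_\sigma}\,d\sigma$, whose real part is made strictly negative by taking $\phi|_\sigma$ proportional to $\partial_\nu v^*|_\sigma$. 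Replacing the direction $Uv^*$ in $UL$ by $Uv^*+\varepsilon\phi$ for small $\varepsilon>0$ then produces an $n$-dimensional subspace of $\dom(\mathfrak{a}_{\delta',\beta})$ on which the maximum Rayleigh quotient is strictly below $\mu$, contradicting $\lambda_n(-\Delta_{\delta',\beta})=\mu$. If $\mu$ is degenerate within $L$, the scheme is applied simultaneously to a basis of the ``bad'' subspace $\{w\in\spn(v_{m+1},\dots,v_n):w|_\sigma=0\}$: the functionals $w\mapsto\mathfrak{a}_{\delta',\beta}(Uw,\cdot)-\mu\langle Uw,\cdot\rangle$ are linearly independent on it by the same unique-continuation observation, so a dual basis of perturbations making the perturbation matrix negative-definite can be chosen.

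I expect the main obstacle to be the global propagation of $v^*\equiv 0$ in the unique-continuation step: Holmgren's theorem only yields local vanishing near $\sigma$, and extending this to all of $\dR^d$ by iterating the transmission conditions through the connected components of $\dR^d\setminus\Sigma$ is somewhat delicate, since $\Sigma$ is only Lipschitz and need not be compact or connected.
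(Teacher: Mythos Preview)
Your approach is sound in outline but takes a genuinely different route from the paper. The paper follows Filonov's counting scheme: it shows $\fra_{\delta',\beta}\le\mu\|\cdot\|^2$ on the sum $F+\ker(-\Delta_{\delta',\beta}-\mu)$, where $F=U(\spn\{\ker(-\Delta_{\delta,\alpha}-\lambda):\lambda\le\mu\})$, and obtains the strict inequality from the dimension count provided this sum is direct, i.e.\ $W_\mu:=U(\dom(-\Delta_{\delta,\alpha}))\cap\ker(-\Delta_{\delta',\beta}-\mu)=\{0\}$ (Proposition~\ref{prop:halbesTheorem}). Any $u\in W_\mu$ then satisfies \emph{both} sets of transmission conditions, and their combination on~$\sigma$ gives $u_1|_\sigma=\partial_{\nu_1}u_1|_\sigma=0$ directly, so a single unique-continuation argument yields $u=0$. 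You instead work only with the $\delta$-transmission condition together with the variational information $G[v^*]=0$; this gives $v^*|_\sigma=0$ but \emph{not} $\partial_\nu v^*|_\sigma=0$, which is why you need the additional one-sided perturbation~$\phi$. Your scheme does go through, but the degenerate case---several independent maximizers in $UL$---requires a genuine simultaneous perturbation and a quadratic estimate to force the maximum over the whole perturbed $n$-space strictly below~$\mu$; your last sentence is a plausible sketch, though the details (choosing $\psi^{(j)}$ with traces approximating $-\partial_\nu v_{m+j}|_\sigma$ so that the Hermitian part of the coupling matrix is negative definite, then controlling the cross terms) are more involved than indicated. The paper's route avoids all of this because elements of $W_\mu$ already carry the $\delta'$-boundary data, so the one-sided normal derivatives exist in $L^2(\Sigma)$ by Proposition~\ref{thm:dom}\,(ii) and the algebra closes without perturbation; it is also more modular, since the criterion $W_\mu=\{0\}$ can be verified by independent means in the borderline case $\beta=4/\alpha$ for special geometries (Section~\ref{sec:examples}), something your argument-by-contradiction would not accommodate as cleanly.
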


If the hypersurface~$\Sigma$ is compact, it is known that $\sess (- \Delta_{\delta', \beta}) =  \sess(-\Delta_{\delta,\alpha}) = [0, \infty)$; cf.~\cite[Theorem~4.2]{BEL13}. Therefore in this case Theorem~A implies strict inequality between all negative eigenvalues of $-\Delta_{\delta,\alpha}$ and $-\Delta_{\delta',\beta}$; note that if $\Sigma$ is compact and sufficiently regular, these operators have only finitely many negative eigenvalues, see~\cite[Theorem 4.2]{BEKS94} and~\cite[Theorem 3.14]{BLL13}.

\begin{cor*}
Let the assumptions of Theorem~A be satisfied and let, additionally,~$\Sigma$ be compact and $C^\infty$-smooth. Then 
\[
\lambda_n(-\Delta_{\delta', \beta}) < \lambda_n(-\Delta_{\delta,\alpha}), \quad n = 1, \dots, N (- \Delta_{\delta, \alpha}),
\]
holds, where $N (- \Delta_{\delta, \alpha})$ denotes the number of negative eigenvalues of~$- \Delta_{\delta, \alpha}$.
\end{cor*}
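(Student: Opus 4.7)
The plan is to deduce the corollary directly from Theorem~A by verifying that the hypothesis $\lambda_n(-\Delta_{\delta,\alpha}) < \inf\sess(-\Delta_{\delta',\beta})$ is automatically satisfied for every $n$ in the range $1,\dots,N(-\Delta_{\delta,\alpha})$.

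The first step is to pin down the location of the essential spectrum. Since $\Sigma$ is compact, \cite[Theorem~4.2]{BEL13} (as already recalled in the paragraph preceding the corollary) gives
\[
\sess(-\Delta_{\delta,\alpha}) = \sess(-\Delta_{\delta',\beta}) = [0,\infty),
\]
so in particular $\inf\sess(-\Delta_{\delta',\beta}) = 0$.

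The second step is essentially bookkeeping: by definition, for each $n \in \{1,\dots,N(-\Delta_{\delta,\alpha})\}$ the eigenvalue $\lambda_n(-\Delta_{\delta,\alpha})$ is strictly negative, and hence
\[
\lambda_n(-\Delta_{\delta,\alpha}) < 0 = \inf\sess(-\Delta_{\delta',\beta}).
\]
Thus the hypothesis of Theorem~A is met for every such $n$, and that theorem yields the claimed strict inequality $\lambda_n(-\Delta_{\delta',\beta}) < \lambda_n(-\Delta_{\delta,\alpha})$ for all $n=1,\dots,N(-\Delta_{\delta,\alpha})$.

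The third ingredient is the well-posedness of the range $n=1,\dots,N(-\Delta_{\delta,\alpha})$: one has to know that $N(-\Delta_{\delta,\alpha})$ is finite. Under compactness and $C^\infty$-smoothness of $\Sigma$ this is guaranteed by \cite[Theorem~4.2]{BEKS94} and \cite[Theorem~3.14]{BLL13}, cited in the same paragraph. There is no genuine obstacle in this proof: the corollary is a clean consequence of Theorem~A once the essential spectrum is identified with $[0,\infty)$ and the finiteness of the negative discrete spectrum is quoted from the literature.
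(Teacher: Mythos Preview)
Your proposal is correct and matches the paper's own reasoning: the corollary is deduced from Theorem~A by invoking \cite[Theorem~4.2]{BEL13} to identify both essential spectra with $[0,\infty)$, so that every negative eigenvalue of $-\Delta_{\delta,\alpha}$ lies below $\inf\sess(-\Delta_{\delta',\beta})$ and Theorem~A applies, with finiteness of $N(-\Delta_{\delta,\alpha})$ quoted from \cite[Theorem~4.2]{BEKS94} and \cite[Theorem~3.14]{BLL13}.
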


Our proof of Theorem~A is based on an idea which was suggested by Filonov in~\cite{F05} and which was used and modified later on in various spectral problems, see~\cite{FL10, GM09, K10, R14}. We remark that the result of Theorem~A can be proved analogously for the more general case of $\Sigma$ being a Lipschitz partition of~$\dR^d$ as considered in~\cite{BEL13}. However, in order to avoid technicalities we restrict ourselves to the case of a hypersurface.

Besides the general result of Theorem~A, which is proved in Section~\ref{sec:proof}, in Section~\ref{sec:examples} we discuss several examples of special geometries of $\Sigma$ for which the strict inequality of Theorem~A holds even in the borderline case $\beta = 4 / \alpha$, for constant strengths~$\alpha, \beta$. 
Among these examples there are the cases of a broken line in $\dR^2$ and an infinite cone in~$\dR^3$. 


\section{Preliminaries}\label{sec:prel}

\subsection{Lipschitz hypersurfaces and weak normal derivatives}\label{sec:weak}

Let us first recall some basic facts and notions. For an arbitrary open set $\Omega \subset \dR^d$, $d \geq 2$, we write $(\cdot, \cdot)_{\Omega}$ for both the inner products in the spaces $L^2 (\Omega)$ and $L^2 (\Omega, \dC^d)$ of scalar and vector-valued square-integrable functions, respectively, without any danger of confusion; the associated norms are denoted by~$\| \cdot \|_\Omega$. As usual, $H^1(\Omega)$ is the Sobolev space of order one and $H_0^1 (\Omega)$ denotes the closure of the space of smooth functions with compact supports in $H^1 (\Omega)$. 

In the following we understand Lipschitz domains in the general sense of, e.g., \cite[\S VI.3]{St}; in particular, we allow noncompact boundaries. We write $\Sigma$ for the boundary of a Lipschitz domain $\Omega$ and denote the inner product in $L^2 (\Sigma)$ by $(\cdot, \cdot)_\Sigma$ and the corresponding norm by $\| \cdot \|_\Sigma$. For $u \in H^1 (\Omega)$ we denote by $u |_{\Sigma}$ the trace of $u$ on $\Sigma$, which extends the restriction map of smooth functions to $\Sigma$ as a bounded linear operator from $H^1 (\Omega)$ to $L^2 (\Sigma)$.

For our purposes it is convenient to deal with the Laplacian as well as the normal derivatives of appropriate Sobolev functions in the following weak sense; such definitions can be found, e.g., in the textbook~\cite{McL}.

\begin{dfn}\label{definition}
Let $\Omega \subset \dR^d$ be a Lipschitz domain.
\begin{itemize}
 \item [{\rm (i)}] Let $u \in H^1(\Omega)$. If there exists $f \in L^2 (\Omega)$ with
\[
(\nabla u, \nabla v)_{\Omega} = (f, v)_{\Omega} \quad \text{for all}~ v\in H^1_0(\Omega),
\]
we say $\Delta u \in L^2 (\Omega)$ and set $- \Delta u := f$.
\item [{\rm (ii)}]  Let $u \in H^1(\Omega)$ with $\Delta u \in L^2 (\Omega)$. If there exists $b \in L^2(\Sigma)$ with
\[
(\nabla u,\nabla v)_{\Omega} - (-\Delta u,v)_{\Omega} = (b, v|_{\Sigma})_{\Sigma} \quad \text{for all}~v \in H^1(\Omega),
\]
we say $\partial_{\nu} u|_{\Sigma} \in L^2(\Sigma)$ and set $\partial_{\nu} u|_{\Sigma} := b$.
\end{itemize}
\end{dfn}

We remark that $\partial_{\nu} u|_{\Sigma}$ is unique if it exists. For each sufficiently smooth $u \in L^2 (\Omega)$ the function $\partial_{\nu} u|_{\Sigma}$ on $\Sigma$ is the usual derivative in the direction of the outer unit normal, which follows immediately from the first Green identity.

We call $\Sigma \subset \dR^d$ a {\em Lipschitz hypersurface} if $\Sigma$ coincides with the boundary of a Lipschitz domain $\Omega_1 \subset \dR^d$. In this case also $\Omega_2 := \dR^d \setminus \ov{\Omega_1}$ is a Lipschitz domain with the same boundary $\Sigma$, and~$\Sigma$ separates~$\dR^d$ into~$\Omega_1$ and~$\Omega_2$. Note that we do not require $\Omega_1$, $\Omega_2$, or $\Sigma$ to be connected; see, e.g., Figure~\ref{hyphyp} in Example~\ref{example:unconnected} below. 

For a Lipschitz hypersurface $\Sigma$ and the corresponding Lipschitz domains $\Omega_1$ and $\Omega_2$ as above we occasionally write a function $u \in L^2 (\dR^d)$ as $u = u_1 \oplus u_2$, where $u_j = u |_{\Omega_j}$, $j = 1, 2$, referring to the orthogonal decomposition $L^2 (\dR^d) = L^2 (\Omega_1) \oplus L^2 (\Omega_2)$.  Moreover, we write $\partial_{\nu_j} u_j |_\Sigma$, $j = 1, 2$, for the normal derivative of~$u_j$ in Definition~\ref{definition}~(ii). 

For the following definition cf.~\cite[Section 2.3]{BEL13}.

\begin{dfn}\label{definition2}
Let $\Sigma$ be a Lipschitz hypersurface which separates $\dR^d$ into two Lipschitz domains $\Omega_1$ and $\Omega_2$. 
Let $u = u_1 \oplus u_2 \in H^1(\dR^d)$ with $\Delta u_j \in L^2(\Omega_j)$, $j = 1, 2$. If there exists $\wt b \in L^2(\Sigma)$ such that
\[
\big(\nabla u,\nabla v\big)_{\dR^d} - \big((-\Delta u_1) \oplus(-\Delta u_2), v\big)_{\dR^d} = (\wt b, v|_{\Sigma})_{\Sigma} \quad \text{for all}~v \in H^1(\dR^d),
\]
we say $[\partial_\nu  u]_\Sigma  \in L^2(\Sigma)$ and set $[\partial_\nu u]_\Sigma := \wt b$. 
\end{dfn}

Note that $[\partial_\nu u]_{\Sigma}$ is unique if it exists; cf.~\cite[Section~2.3]{BEL13}. The interpretation of $[\partial_\nu u]_{\Sigma}$ is provided in the following lemma.

\begin{lem}
\label{lem:weakjump}
Let $\Sigma$ be a Lipschitz hypersurface which separates $\dR^d$ into two Lipschitz domains $\Omega_1$ and $\Omega_2$. Let $u = u_1 \oplus u_2 \in H^1(\dR^d)$ with $\Delta u_j \in L^2(\Omega_j)$ and $\partial_{\nu_j} u_j |_{\Sigma} \in L^2(\Sigma)$, $j = 1, 2$. Then $[\partial_\nu u]_{\Sigma}\in L^2(\Sigma)$ and
\[
[\partial_\nu u]_{\Sigma} = \partial_{\nu_1} u_1 |_{\Sigma} +  \partial_{\nu_2} u_2 |_{\Sigma}.
\]
\end{lem}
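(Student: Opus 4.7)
The plan is to verify the defining identity in Definition~\ref{definition2} directly with the candidate $\wt b := \partial_{\nu_1} u_1|_\Sigma + \partial_{\nu_2} u_2|_\Sigma$, which clearly lies in $L^2(\Sigma)$ by hypothesis. The whole strategy is to split every integral over $\dR^d$ into a sum of integrals over $\Omega_1$ and $\Omega_2$, apply Definition~\ref{definition}~(ii) on each domain separately, and then reassemble the two boundary terms on the common hypersurface $\Sigma$.

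More concretely, I would fix an arbitrary test function $v \in H^1(\dR^d)$, set $v_j := v|_{\Omega_j}$, which belongs to $H^1(\Omega_j)$ for $j=1,2$, and use the orthogonal decomposition $L^2(\dR^d) = L^2(\Omega_1) \oplus L^2(\Omega_2)$ to write
\[
(\nabla u, \nabla v)_{\dR^d} = (\nabla u_1, \nabla v_1)_{\Omega_1} + (\nabla u_2, \nabla v_2)_{\Omega_2}
\]
and
\[
\bigl((-\Delta u_1)\oplus(-\Delta u_2), v\bigr)_{\dR^d} = (-\Delta u_1, v_1)_{\Omega_1} + (-\Delta u_2, v_2)_{\Omega_2}.
\]
Since $\partial_{\nu_j} u_j|_\Sigma \in L^2(\Sigma)$ by assumption, Definition~\ref{definition}~(ii) applied on each $\Omega_j$ with the admissible test function $v_j$ yields
\[
(\nabla u_j, \nabla v_j)_{\Omega_j} - (-\Delta u_j, v_j)_{\Omega_j} = (\partial_{\nu_j} u_j|_\Sigma, v_j|_\Sigma)_\Sigma,\qquad j = 1,2.
\]
Adding these two identities reduces the proof to identifying the right-hand side as a single boundary integral against $\wt b$.

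The key observation needed at this step is that $v_1|_\Sigma$ and $v_2|_\Sigma$ coincide: both equal the trace of $v$ on $\Sigma$, which is unambiguous because $v \in H^1(\dR^d)$ has no jump of its trace across a Lipschitz hypersurface. (This is the standard fact that the two-sided trace is well-defined and can be obtained by density of smooth functions in $H^1(\dR^d)$, together with continuity of the one-sided trace operators $H^1(\Omega_j) \to L^2(\Sigma)$.) Using this, the sum of the two boundary terms equals $(\partial_{\nu_1} u_1|_\Sigma + \partial_{\nu_2} u_2|_\Sigma, v|_\Sigma)_\Sigma$, which is exactly the right-hand side in Definition~\ref{definition2} with $\wt b$ as above. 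The uniqueness statement recalled after Definition~\ref{definition2} then implies $[\partial_\nu u]_\Sigma = \partial_{\nu_1} u_1|_\Sigma + \partial_{\nu_2} u_2|_\Sigma$.

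The only subtlety is the trace continuity across $\Sigma$; everything else is bookkeeping built on linearity of the inner products and the orthogonal decomposition of $L^2(\dR^d)$. I do not anticipate any real technical obstacle, since the Lipschitz regularity of $\Sigma$ is more than enough to make both one-sided trace operators bounded and compatible on smooth functions, whose density then transfers the compatibility to all of $H^1(\dR^d)$.
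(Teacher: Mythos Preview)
Your proposal is correct and follows essentially the same route as the paper: split the $\dR^d$-integrals into the $\Omega_1$- and $\Omega_2$-pieces, apply Definition~\ref{definition}~(ii) on each side, and recombine the boundary terms using that $v_1|_\Sigma = v_2|_\Sigma = v|_\Sigma$ for $v\in H^1(\dR^d)$. The only difference is that you spell out the trace-continuity point explicitly, whereas the paper takes it as implicit in writing $v|_\Sigma$.
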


\begin{proof}
Let us fix an arbitrary $v\in H^1(\dR^d)$. Clearly $v_j\in H^1(\Omega_j)$ holds for $j = 1, 2$. Thus employing Definition~\ref{definition}~(ii) we get
\[
\begin{split}
& \big(\nabla u,\nabla v\big)_{\dR^d} - \big((-\Delta u_1) \oplus(-\Delta u_2), v \big)_{\dR^d}\\ 
&\quad = \Big[\big(\nabla u_1,\nabla v_1\big)_{\Omega_1} - (-\Delta u_1, v_1\big)_{\Omega_1}\Big] + \Big[\big(\nabla u_2,\nabla v_2\big)_{\Omega_2} - \big(-\Delta u_2, v_2\big)_{\Omega_2}\Big]\\
&\quad = (\partial_{\nu_1} u_1|_{\Sigma} +  \partial_{\nu_2}u_2|_{\Sigma}, v|_{\Sigma})_{\Sigma}
\end{split}
\]
and the claim follows from Definition~\ref{definition2}.
\end{proof}

\subsection{Schr\"odinger operators with {\boldmath $\delta$} and {\boldmath $\delta'$}-interactions}
\label{sec:def}

In this paragraph we recall the mathematically rigorous definitions of the self-adjoint Schr\"odinger operators with $\delta$ and $\delta'$-interactions supported on a Lipschitz hypersurface~$\Sigma$. For the required material on semibounded, closed sesquilinear forms and corresponding self-adjoint operators we refer the reader to~\cite[Chapter~VI]{Kato}.

\begin{dfn}\label{def:operators}
Let $\Sigma$ be a Lipschitz hypersurface which separates $\dR^d$ into two Lipschitz domains $\Omega_1$ and $\Omega_2$.
\begin{enumerate}
 \item[(i)] The Schr\"odinger operator $-\Delta_{\delta,\alpha}$ in $L^2(\dR^d)$ with a $\delta$-interaction supported on $\Sigma$ of strength $\alpha : \Sigma \to \dR$ with $\alpha \in L^\infty(\Sigma)$ is the unique self-adjoint operator in $L^2 (\dR^d)$ which corresponds to the densely defined, symmetric, lower semibounded, closed sesquilinear form 
 \begin{equation}\label{formdelta}
  \fra_{\delta,\alpha}[u,v] = (\nabla u,\nabla v)_{\dR^d} - ( \alpha u|_{\Sigma}, v|_{\Sigma} )_\Sigma, \quad \dom \fra_{\delta,\alpha} = H^1(\dR^d),
 \end{equation}
 (cf.~\cite[Section 2]{BEKS94} for $C^1$-smooth $\Sigma$ and \cite[Proposition 3.1]{BEL13} for the Lipschitz case).
 \item[(ii)] The Schr\"odinger operator $-\Delta_{\delta',\beta}$ in $L^2(\dR^d)$ with a $\delta'$-interaction supported on $\Sigma$ of strength $\beta : \Sigma \to \dR$ with $\beta^{-1}\in L^\infty(\Sigma)$ is the self-adjoint operator in $L^2 (\dR^d)$ which corresponds to the densely defined, symmetric, lower semibounded and closed sesquilinear form 
 \begin{equation}\label{formdelta'}
 \begin{split}
 \fra_{\delta',\beta}[u,v] &\!=\! (\nabla u_1,\nabla v_1)_{\Omega_1}\!+\! (\nabla u_2,\nabla v_2)_{\Omega_2}\!-\!
 \Big( \frac{1}{\beta} (u_1|_{\Sigma}\!-\!u_2|_{\Sigma}), (v_1|_{\Sigma}\!-\! v_2|_{\Sigma}) \Big)_\Sigma,\\
 \dom \fra_{\delta',\beta} &= H^1(\dR^d\setminus\Sigma),
 \end{split}
 \end{equation}
 (cf.~\cite[Proposition 3.1]{BEL13}).
\end{enumerate}
\end{dfn}

The actions and domains of the operators~$-\Delta_{\delta,\alpha}$ and~$-\Delta_{\delta',\beta}$ can be characterized in the following way, using the weak Laplacians and normal derivatives from Definition~\ref{definition} and Definition~\ref{definition2}.

\begin{prop}\cite[Theorem 3.3]{BEL13}
\label{thm:dom}
Let $\Sigma$ be a Lipschitz hypersurface which separates $\dR^d$ into two Lipschitz domains $\Omega_1$ and $\Omega_2$. Moreover, let $\alpha, \beta : \Sigma \to \dR$ be functions such that $\alpha,\beta^{-1}\in L^\infty(\Sigma)$. Then the self-adjoint operators~$-\Delta_{\delta,\alpha}$ and~$-\Delta_{\delta',\beta}$ in Definition~\ref{def:operators} have the following representations.
\begin{itemize}
 \item [{\rm (i)}] 
 $-\Delta_{\delta,\alpha}u \!=\! (-\Delta u_1)\!\oplus\!(-\Delta u_2)$ 
 and $u\!=\! u_1\oplus u_2\in\dom(-\Delta_{\delta,\alpha})$ 
  \!\!\!  if and only if
\vskip 0.2cm
 \begin{itemize}\setlength{\itemsep}{0.2cm}
\item[\rm (a)] $u\in H^1(\dR^d)$,
\item[\rm (b)] $\Delta u_j  \in L^2(\Omega_j)$, $j = 1, 2$, and
\item[\rm (c)] $[\partial_\nu u]_{\Sigma}\in L^2(\Sigma)$
exists in the sense of Definition~\ref{definition2} and 
\[
[\partial_\nu u]_{\Sigma} = \alpha u|_{\Sigma}.
\]
\end{itemize}
 \item [{\rm (ii)}] 
 $-\Delta_{\delta^\prime,\beta}u\! =\! (-\Delta u_1)\!\oplus\! 
 (-\Delta u_2)$ and $u\!=\! u_1\oplus u_2\!\in\!\dom(\!-\Delta_{\delta^\prime,\beta})$ \!\!\!
  if and only if
\vskip 0.2cm
\begin{itemize}\setlength{\itemsep}{0.2cm}
\item[\rm (a$^\prime$)] $u_j \in H^1(\Omega_j)$, $j = 1, 2$,
\item[\rm (b$^\prime$)] $\Delta u_j  \in L^2(\Omega_j)$, $j = 1, 2$, and
\item[\rm (c$^\prime$)] 
$\partial_{\nu_j} u_j |_{\Sigma}\in L^2(\Sigma)$ exist
in the sense of Definition~\ref{definition}~(ii), $j = 1, 2$, and 
\begin{equation*}
  u_1|_{\Sigma} - u_2|_{\Sigma} = \beta \partial_{\nu_1} u_1|_{\Sigma} = - \beta \partial_{\nu_2} u_2 |_\Sigma.
\end{equation*}
\end{itemize}
\end{itemize}
\end{prop}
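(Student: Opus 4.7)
The plan is to derive both representations directly from the first representation theorem (Kato, Chapter~VI) for sesquilinear forms: $u\in\dom(-\Delta_{\delta,\alpha})$ if and only if $u$ lies in $\dom\fra_{\delta,\alpha}$ and the map $v\mapsto\fra_{\delta,\alpha}[u,v]$ extends continuously with respect to $\|\cdot\|_{\dR^d}$, in which case the $L^2$-Riesz representative is $-\Delta_{\delta,\alpha}u$; similarly for $-\Delta_{\delta',\beta}$. The task then reduces to unpacking this form--operator identity using Definitions~\ref{definition} and~\ref{definition2}, which are designed precisely to extract the weak Laplacians in each $\Omega_j$ and the weak (jump of) normal derivative on $\Sigma$.

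For part~(i), I would proceed as follows. Condition~(a) is immediate from $\dom\fra_{\delta,\alpha}=H^1(\dR^d)$. To obtain~(b), I restrict to test functions $v\in H^1_0(\Omega_j)$ extended by zero to $\dR^d$; the trace $v|_\Sigma=0$ kills the $\Sigma$-integral in $\fra_{\delta,\alpha}$, and the resulting identity $(\nabla u_j,\nabla v)_{\Omega_j}=(-\Delta_{\delta,\alpha}u,v)_{\Omega_j}$ is precisely Definition~\ref{definition}(i) for $u_j$, yielding $\Delta u_j\in L^2(\Omega_j)$ and $-\Delta u_j=(-\Delta_{\delta,\alpha}u)|_{\Omega_j}$. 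With~(b) in hand, testing against arbitrary $v\in H^1(\dR^d)$ and substituting via~(b) gives
\[
(\nabla u,\nabla v)_{\dR^d}-\bigl((-\Delta u_1)\oplus(-\Delta u_2),v\bigr)_{\dR^d}=(\alpha u|_\Sigma,v|_\Sigma)_\Sigma,
\]
which is the defining identity of Definition~\ref{definition2} with $\widetilde b=\alpha u|_\Sigma$, so $[\partial_\nu u]_\Sigma\in L^2(\Sigma)$ exists and equals $\alpha u|_\Sigma$, proving~(c). The converse direction runs the same computation in reverse: starting from~(a)--(c) and invoking Definition~\ref{definition2} to rewrite the $\Sigma$-integral of $\alpha u|_\Sigma$, one reassembles $\fra_{\delta,\alpha}[u,v]=((-\Delta u_1)\oplus(-\Delta u_2),v)_{\dR^d}$ for every $v\in H^1(\dR^d)$, and the representation theorem closes the loop.

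Part~(ii) runs in parallel, the only new twist being that the larger form domain $\dom\fra_{\delta',\beta}=H^1(\Omega_1)\oplus H^1(\Omega_2)$ admits test functions whose traces on the two sides of $\Sigma$ are independent. Conditions~(a$'$) and~(b$'$) are obtained exactly as~(a) and~(b). For~(c$'$), I would test separately with $v=v_1\oplus 0$ for arbitrary $v_1\in H^1(\Omega_1)$ and with $v=0\oplus v_2$ for arbitrary $v_2\in H^1(\Omega_2)$; each choice isolates one of the traces in the $\delta'$-term, so that Definition~\ref{definition}(ii) applied on $\Omega_j$ yields $\partial_{\nu_1}u_1|_\Sigma=\beta^{-1}(u_1|_\Sigma-u_2|_\Sigma)$ and $\partial_{\nu_2}u_2|_\Sigma=-\beta^{-1}(u_1|_\Sigma-u_2|_\Sigma)$, which rearrange to~(c$'$). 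The converse is a matter of integrating by parts on each $\Omega_j$ via Definition~\ref{definition}(ii) and combining the two boundary contributions using~(c$'$) and the sign convention $\nu_1=-\nu_2$ to reassemble the $\delta'$-integral. I do not expect any deeper obstacle: the main point requiring care is the sign bookkeeping for the oppositely oriented normals on $\Sigma$, after which the computation is entirely routine.
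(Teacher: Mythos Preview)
The paper does not prove this proposition at all; it merely states it and cites \cite[Theorem~3.3]{BEL13}. Your argument via the first representation theorem---extracting the weak Laplacians by testing with $H^1_0(\Omega_j)$ functions and then the boundary conditions by testing with the full form domain---is the standard and correct route, and it is essentially the proof one finds in \cite{BEL13}.
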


\section{Proof of Theorem~A}\label{sec:proof}

In this section we provide the proof of Theorem~A. As a first step we show the following proposition. In its formulation the unitary operator 
\begin{equation}
\label{U}
 U \colon L^2(\dR^d)\rightarrow L^2(\dR^d),\qquad U (u_1 \oplus u_2) := u_1 \oplus (-u_2),
\end{equation}
appears, which was already mentioned in the introduction.

\begin{prop}\label{prop:halbesTheorem}
Let $0 < \beta \leq 4 / \alpha$. If
\begin{equation}\label{Wmu}
 W_\mu :=  U \big(\dom(-\Delta_{\delta,\alpha})\big)\cap \ker(-\Delta_{\delta', \beta} -\mu) = \{0\}
\end{equation}
holds for each $\mu < \inf \sigma_{\rm ess} (- \Delta_{\delta', \beta})$ then
\[
\lambda_n(-\Delta_{\delta', \beta}) < \lambda_n(-\Delta_{\delta,\alpha})
\]
holds for all $n \in \dN$ such that $\lambda_n (- \Delta_{\delta, \alpha}) < \inf \sess (- \Delta_{\delta', \beta})$.
\end{prop}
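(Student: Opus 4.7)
The plan is a Filonov-type contradiction argument based on a test subspace that combines the low-lying eigenspaces of both operators. Suppose, for some admissible $n$ (i.e., $\lambda_n(-\Delta_{\delta,\alpha}) < \inf \sess(-\Delta_{\delta',\beta})$), the strict inequality fails. Combined with the non-strict bound $\lambda_n(-\Delta_{\delta',\beta}) \le \lambda_n(-\Delta_{\delta,\alpha})$ recalled from \cite[Theorem~3.6]{BEL13}, this forces equality $\lambda_n(-\Delta_{\delta',\beta}) = \lambda_n(-\Delta_{\delta,\alpha}) =: \lambda$. Since $\lambda < \inf \sess(-\Delta_{\delta',\beta})$, it is a discrete eigenvalue of $-\Delta_{\delta',\beta}$; set $E_\lambda := \ker(-\Delta_{\delta',\beta} - \lambda)$ and $k := \dim E_\lambda \ge 1$.

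Let $\cL_n \subset \dom(-\Delta_{\delta,\alpha})$ be the span of the first $n$ eigenfunctions of $-\Delta_{\delta,\alpha}$, so that $\fra_{\delta,\alpha}[u,u] \le \lambda \|u\|_{\dR^d}^2$ for every $u \in \cL_n$. Form the candidate test subspace
\[
\cM := U(\cL_n) + E_\lambda \subset H^1(\dR^d \setminus \Sigma) = \dom \fra_{\delta',\beta}.
\]
Since $\cL_n \subset \dom(-\Delta_{\delta,\alpha})$, the hypothesis~\eqref{Wmu} applied at $\mu = \lambda$ yields $U(\cL_n) \cap E_\lambda \subset W_\lambda = \{0\}$, so $\dim \cM = n+k$.

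The heart of the proof is to show $\fra_{\delta',\beta}[w,w] \le \lambda \|w\|_{\dR^d}^2$ for every $w = Uu + e \in \cM$ with $u \in \cL_n$ and $e \in E_\lambda$. The form version of $U^{-1}(-\Delta_{\delta',\beta})U \le -\Delta_{\delta,\alpha}$ from~\cite[Theorem~3.6]{BEL13} gives $\fra_{\delta',\beta}[Uu,Uu] \le \fra_{\delta,\alpha}[u,u] \le \lambda \|Uu\|_{\dR^d}^2$, while $\fra_{\delta',\beta}[e,e] = \lambda \|e\|_{\dR^d}^2$. For the mixed contribution, the first representation theorem applied to $e \in \dom(-\Delta_{\delta',\beta})$ yields $\fra_{\delta',\beta}[Uu,e] = (Uu, -\Delta_{\delta',\beta} e)_{\dR^d} = \lambda (Uu, e)_{\dR^d}$, and because $\lambda$ is real the two cross terms sum to $2\lambda \operatorname{Re}(Uu,e)_{\dR^d}$. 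Adding the three contributions and comparing with $\lambda \|Uu+e\|_{\dR^d}^2$ gives the claim.

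The min-max principle, applied to the $(n+k)$-dimensional subspace $\cM$, then forces $\lambda_{n+k}(-\Delta_{\delta',\beta}) \le \lambda$. On the other hand, letting $n_0$ be the smallest index with $\lambda_{n_0}(-\Delta_{\delta',\beta}) = \lambda$, the multiplicity of $\lambda$ implies $n \le n_0 + k - 1$, hence $n+k \ge n_0 + k$; but $\lambda_{n_0+k}(-\Delta_{\delta',\beta})$ is strictly greater than $\lambda$---either a larger discrete eigenvalue, or $\inf \sess(-\Delta_{\delta',\beta}) > \lambda$ in the extended min-max convention---delivering the desired contradiction. The only genuinely delicate point is the dimension count for $\cM$, which is exactly what the hypothesis~\eqref{Wmu} is designed to supply; the rest of the proof is the standard Filonov device of~\cite{F05}.
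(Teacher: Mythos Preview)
Your argument is correct and follows the same Filonov-type scheme as the paper: build the test space $U(\text{low eigenspaces of }-\Delta_{\delta,\alpha}) + \ker(-\Delta_{\delta',\beta}-\lambda)$, bound the $\delta'$-form by $\lambda$ on it, and use the hypothesis~\eqref{Wmu} for the dimension count. The paper carries this out as a direct counting-function inequality and expands $\fra_{\delta',\beta}[u+v]$ explicitly into three pieces $I,J,K$, computing the cross term $K$ via Definition~\ref{definition}~(ii) and the boundary conditions in Proposition~\ref{thm:dom}; your use of the first representation theorem for the cross term and of the form inequality from~\cite[Theorem~3.6]{BEL13} for the diagonal term is a legitimate and slightly cleaner shortcut that avoids that computation. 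One small slip: in your final paragraph the implication ``$n \le n_0+k-1$, hence $n+k \ge n_0+k$'' is a non sequitur; the inequality $n+k \ge n_0+k$ follows instead from $n \ge n_0$, which holds because $n_0$ is the smallest index with $\lambda_{n_0}(-\Delta_{\delta',\beta})=\lambda$ and $\lambda_n(-\Delta_{\delta',\beta})=\lambda$.
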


\begin{proof}
Let $N_{\delta,\alpha}(\cdot)$ and $N_{\delta', \beta}(\cdot)$ be the counting functions for the eigenvalues below the bottom of the essential spectrum of the operators $-\Delta_{\delta,\alpha}$ and $-\Delta_{\delta', \beta}$, respectively, that is,
\begin{align*}
 N_{\delta,\alpha}(\mu) := \# \big\{ k \in \dN : \lambda_k (-\Delta_{\delta, \alpha}) \leq \mu \big\}, \quad \mu < \inf \sess (- \Delta_{\delta, \alpha}),
\end{align*}
and
\begin{align*}
 N_{\delta',\beta}(\mu) := \# \big\{ k \in \dN : \lambda_k (-\Delta_{\delta', \beta}) \leq \mu \big\}, \quad \mu < \inf \sess (- \Delta_{\delta', \beta}).
\end{align*}
It follows from the min-max principle, see~\cite[Chapter~10]{BS} or~\cite[Chapter~12]{S}, that these functions can be expressed as
\begin{align*}
 N_{\delta,\alpha}(\mu) = \max \left\{\dim L\colon L~\text{subspace of}~H^1(\dR^d),~\fra_{\delta,\alpha}[u] \le \mu\|u\|^2_{\dR^d},~u\in L \right\}
\end{align*}
and
\begin{align*}
 N_{\delta', \beta}(\mu) &= \max \left\{\dim L\colon\! L~\text{subspace of}~H^1(\dR^d\setminus\Sigma),~\fra_{\delta', \beta}[u] \le \mu\|u\|^2_{\dR^d},\!~u\in L \right\},
\end{align*}
where $\fra_{\delta,\alpha}$ and $\fra_{\delta', \beta}$ are the sesquilinear forms in~\eqref{formdelta} and~\eqref{formdelta'}, respectively. Let $\mu < \inf \sess (-\Delta_{\delta', \beta}) \leq \inf \sess (- \Delta_{\delta, \alpha})$ and define
\[
 F := U({\rm span}\{ \ker(-\Delta_{\delta,\alpha} - \lambda)\colon \lambda\le\mu\})
\]
with $U$ as in \eqref{U}. Then $\dim F = N_{\delta,\alpha}(\mu)$ and
\begin{equation}\label{I}
 \fra_{\delta,\alpha}[U^{-1} u] \le \mu\|u\|_{\dR^d}^2,\qquad u\in F,
\end{equation}
where we have used the abbreviation $\fra_{\delta, \alpha} [w] := \fra_{\delta, \alpha} [w, w]$ for $w \in \dom \fra_{\delta, \alpha}$. For $u\in F$ and $v\in \ker(-\Delta_{\delta', \beta}-\mu)$ we have $u_1 |_\Sigma = - u_2 |_\Sigma$ and it follows from~\eqref{formdelta'} that $u, v \in \dom \fra_{\delta', \beta}$ and
\begin{equation}
\label{frauv}
\begin{split}
\fra_{\delta', \beta}[u + v] &= \|\nabla (u_1 + v_1) \|^2_{\Omega_1} \!+\! \|\nabla (u_2 + v_2)\|^2_{\Omega_2} \\
 & \quad - \Big( \frac{1}{\beta} (2 u_1 |_{\Sigma} + (v_1 |_{\Sigma} -v_2 |_{\Sigma})), 2 u_1 |_{\Sigma} + (v_1 |_{\Sigma} -v_2 |_{\Sigma}) \Big)_\Sigma \\
 & = I+ J+ K,
\end{split}
\end{equation}
where
\[
\begin{split}
I &:= \|\nabla v_1 \|^2_{\Omega_1} +\|\nabla v_2\|^2_{\Omega_2} - \Big( \frac{1}{\beta} (v_1 |_{\Sigma} -v_2 |_{\Sigma}), v_1 |_{\Sigma} -v_2 |_{\Sigma} \Big)_\Sigma,\\
J  &:= \|\nabla u_1\|^2_{\Omega_1} +\|\nabla u_2\|^2_{\Omega_2} - \Big( \frac{4}{\beta} u_1|_{\Sigma}, u_1|_{\Sigma} \Big)_\Sigma,
\end{split}
\]
and
\[
 K := 2\Re\Big[ \big( \nabla u_1, \nabla v_1 \big)_{\Omega_1} + \big( \nabla u_2, \nabla v_2 \big)_{\Omega_2}
 - \Big( \frac{2}{\beta} u_1 |_{\Sigma}, (v_1 |_{\Sigma} - v_2|_{\Sigma}) \Big)_\Sigma \Big].
\]
According to the choices of $u$ an $v$ and due to \eqref{I} we get
\begin{equation}\label{IJ}
 I \!= \fra_{\delta', \beta} [v] = \mu\|v\|_{\dR^d}^2\qquad\text{and}\qquad J\! = \fra_{\delta, 4/\beta} [U^{-1} u] \le \fra_{\delta,\alpha}[U^{-1} u] \le \mu\|u\|^2_{\dR^d},
\end{equation}
since $\alpha \leq 4/\beta$. Moreover, Definition~\ref{definition}~(ii) and Proposition~\ref{thm:dom}~(ii) give us
\[
 K  = 2 \Re \Big[ \mu (u, v)_{\dR^d} + \big( u_1 |_{\Sigma}, \partial_{\nu_1} v_1|_{\Sigma} \big)_\Sigma + 
\big( u_2 |_{\Sigma}, \partial_{\nu_2} v_2 |_{\Sigma} \big)_\Sigma - \Big(\frac{2}{\beta} u_1 |_{\Sigma}, (v_1 |_{\Sigma} -v_2 |_{\Sigma}) \Big)_\Sigma\!\Big],
\]
where we have used that $-\Delta v_j = \mu v_j$, $j = 1, 2$. By Proposition~\ref{thm:dom} we have
\[
u_1 |_{\Sigma} = -u_2 |_{\Sigma}\quad\text{and}\quad \partial_{\nu_1} v_1 |_{\Sigma} =-\partial_{\nu_2} v_2 |_{\Sigma} = \frac{1}{\beta}(v_1|_{\Sigma} -v_2|_{\Sigma}),
\]
and hence we  obtain
\[
\begin{split}
K & = 2\Re\Big[\mu (u, v)_{\dR^d} + \Big( \frac{2}{\beta} u_1 |_{\Sigma}, (v_1 |_{\Sigma} -v_2 |_{\Sigma}) \Big)_\Sigma - \Big( \frac{2}{\beta} u_1 |_{\Sigma}, (v_1 |_{\Sigma} -v_2 |_{\Sigma}) \Big)_\Sigma \Big]\\
&  =2\mu \Re (u, v)_{\dR^d}.
\end{split}
\]
Combining the above expression for $K$ with~\eqref{frauv} and~\eqref{IJ} we arrive at
\begin{align}\label{eq:est}
\fra_{\delta', \beta}[u+v] \le 
\mu\|u \|^2_{\dR^d}
+ 2\mu \Re (u, v)_{\dR^d} + \mu\|v \|^2_{\dR^d}= \mu\|u + v\|^2_{\dR^d}
\end{align}
for all $u \in F$ and all $v \in \ker (- \Delta_{\delta', \beta} - \mu)$. From the assumption~\eqref{Wmu} we conclude 
\begin{align*}
 \dim \big( F + \ker (- \Delta_{\delta', \beta} - \mu) \big) = N_{\delta,\alpha}(\mu) + \dim \ker (- \Delta_{\delta', \beta} - \mu)
\end{align*}
and thus~\eqref{eq:est} implies
\[
N_{\delta', \beta}(\mu) \ge 
N_{\delta,\alpha}(\mu) + 
\dim\ker(-\Delta_{\delta', \beta} - \mu).
\]
Hence,
\begin{align*}
\# \big\{ k\in\dN\colon  \lambda_k(-\Delta_{\delta', \beta}) < \mu\big\}
= N_{\delta', \beta}(\mu)
-\dim\ker(-\Delta_{\delta', \beta} - \mu) 
\ge N_{\delta,\alpha}(\mu).
\end{align*}
Choosing $\mu = \lambda_n (-\Delta_{\delta,\alpha})$ for an arbitrary $n \in\dN$ such that $\mu < \inf \sigma_{\rm ess} (- \Delta_{\delta', \beta})$, it follows
\[
 \# \big\{k\in\dN\colon  \lambda_k(-\Delta_{\delta', \beta}) < \lambda_n (-\Delta_{\delta,\alpha}) \big\}
\ge n.
\]
Thus 
$\lambda_n(-\Delta_{\delta', \beta}) < \lambda_n(-\Delta_{\delta,\alpha})$ for all $n$ with $\lambda_n (- \Delta_{\delta, \alpha}) < \inf \sess (- \Delta_{\delta', \beta})$. This completes the proof of the proposition.
\end{proof}

We will now apply Proposition~\ref{prop:halbesTheorem} in order to prove Theorem~A.

\begin{proof}[{\bf Proof of Theorem~A}]
Let $\sigma \subset \Sigma$ be a nonempty open set such that 
\begin{align}\label{eq:assumpt}
 \beta |_\sigma < (4 / \alpha) |_\sigma.
\end{align}
By Proposition~\ref{prop:halbesTheorem}, in order to prove Theorem~A it suffices to verify~\eqref{Wmu} for each $\mu  < \inf \sess (- \Delta_{\delta', \beta})$. Let us fix such a~$\mu$ and let~$u \in W_\mu$. Proposition~\ref{thm:dom}~(ii) yields 
\begin{align}\label{eq:solutions}
 - \Delta u_j = \mu u_j, \quad j = 1, 2,
\end{align}
and
\begin{equation}\label{eq:bc1}
 u_1|_{\Sigma} - u_2|_{\Sigma} = \beta \partial_{\nu_1}u_1|_{\Sigma} = - \beta \partial_{\nu_2} u_2|_{\Sigma}.
\end{equation}
On the other hand, from Proposition~\ref{thm:dom}~(i) and Lemma~\ref{lem:weakjump} we obtain
\begin{equation}\label{eq:bc2}
 u_1 |_{\Sigma} + u_2 |_{\Sigma} = 0,\quad \text{and} \quad \partial_{\nu_1} u_1 |_{\Sigma} - \partial_{\nu_2} u_2 |_{\Sigma} = \alpha u_1 |_{\Sigma}.
\end{equation}
The conditions \eqref{eq:bc1} and \eqref{eq:bc2} yield
\begin{align}\label{eq:contradiction}
 \partial_{\nu_1} u_1 |_\Sigma = \alpha u_1 |_\Sigma + \partial_{\nu_2} u_2 |_\Sigma = \frac{\alpha \beta}{2} \partial_{\nu_1} u_1 |_\Sigma + \partial_{\nu_2} u_2 |_\Sigma = \bigg( \frac{\alpha \beta}{2} - 1 \bigg) \partial_{\nu_1} u_1 |_\Sigma. 
\end{align}
By~\eqref{eq:assumpt} we have $\frac{\alpha \beta}{2} |_\sigma - 1 < 1$ on~$\sigma$, hence~\eqref{eq:contradiction} implies $\partial_{\nu_1} u_1 |_\sigma = 0$. With the help of~\eqref{eq:bc1} and~\eqref{eq:bc2} it follows $u_1 |_\sigma = \frac{\beta}{2} \partial_{\nu_1} u_1 |_\sigma = 0$. Let now $\Omega$ be a connected component of $\Omega_1$ such that $\partial \Omega \cap \sigma \neq \varnothing$. As in the proof of~\cite[Proposition~2.5]{BR12} let us choose a connected Lipschitz domain $\widetilde \Omega$ such that $\Omega \subset \widetilde \Omega$, $\partial \Omega \setminus \sigma \subset \partial\widetilde \Omega$, and $\widetilde \Omega \setminus \Omega$ has a nonempty interior. Then the function $\widetilde u$ with $\widetilde u = u_1$ on $\Omega$ and $\widetilde u = 0$ on $\widetilde \Omega \setminus \Omega$ belongs to $L^2 (\widetilde \Omega)$ and satisfies $- \Delta \widetilde u = \mu \widetilde u$ on $\widetilde \Omega$. Indeed, $\widetilde u \in H^1 (\widetilde \Omega)$ 
since $u_1 |_\sigma = 0$. Moreover, for each $\widetilde v \in H_0^1 (\wt\Omega)$ we have
\begin{align*}
 (\nabla \widetilde u, \nabla \widetilde v)_{\widetilde \Omega} = (\nabla u_1, \nabla v)_{\Omega} = (- \Delta u_1, v)_{\Omega} + (\partial_{\nu_1} u_1 |_{\partial \Omega}, v |_{\partial \Omega} )_{\partial \Omega},
\end{align*}
where $v$ denotes the restriction of $\wt v$ to $\Omega$. Since $v |_{\partial \Omega \setminus \sigma} = 0$  and $\partial_{\nu_1} u_1 |_\sigma = 0$ it follows with the help of~\eqref{eq:solutions}
\begin{align*}
 (\nabla \widetilde u, \nabla \widetilde v)_{\widetilde \Omega} = (\mu u_1, v)_{\Omega} = (\mu \widetilde u, \widetilde v)_{\widetilde \Omega},
\end{align*}
thus $- \Delta \widetilde u = \mu \widetilde u$ by Definition~\ref{definition}~(i). As $\widetilde u$ vanishes on the nonempty interior of $\widetilde \Omega \setminus \Omega$, a unique continuation argument implies $\widetilde u = 0$, see, e.g.,~\cite[Theorem~XIII.63]{RS}. Hence $u_1$ is identically equal to zero on the connected component $\Omega$ of $\Omega_1$. 

It remains to conclude from this that $u = 0$ identically on $\dR^d$. Indeed, since $\Sigma$ separates $\dR^d$ into the Lipschitz domains $\Omega_1$ and $\Omega_2$, there exists a connected component $\Lambda$ of $\Omega_2$ such that $\tau := \partial \Omega \cap \partial \Lambda \neq \varnothing$. Since $u_1 |_{\tau} = \partial_{\nu_1} u_1 |_{\tau} = 0$ it follows with the help of~\eqref{eq:bc1} that $u_2 |_{\tau} = \partial_{\nu_2} u_2 |_{\tau} = 0$; another application of unique continuation implies $u_2 |_\Lambda = 0$. Repeating the same argument successively for the respective neighboring connected components finally it follows $u = 0$ on all of $\dR^d$, which completes the proof of the theorem.
\end{proof}

\section{The borderline case $\beta = 4/\alpha$}\label{sec:examples}

In this section we present various examples with explicit geometries of the interaction support~$\Sigma$, where $\beta = 4 / \alpha$ and the strict eigenvalue inequality in Theorem~A remains valid. In all the following examples the strengths of interactions $\alpha$ and $\beta$ are constants.

\begin{example}\label{example:broken}
In this example we consider the broken line
\begin{equation*}
\Sigma := \big\{(x, \cot(\theta)|x|)\in\dR^2 \colon x\in\dR\big\},
\qquad \theta\in (0,\pi/2),
\end{equation*}
which splits $\dR^2$ into the two domains
\begin{align*}
 \Omega_1 = \big\{(x, y)\in\dR^2 \colon x\in\dR, y > \cot(\theta)|x| \big\}
\end{align*}
and
\begin{align*}
 \Omega_2 = \big\{(x, y)\in\dR^2 \colon x\in\dR, y < \cot(\theta)|x|\big\};
\end{align*}
cf. Figure~\ref{fig:broken}.
\begin{figure}[h]
\vspace{3ex}
\begin{center}
\begin{picture}(120,100)
\put(80,67){$\Sigma$}
\put(51,67){$\Omega_1$}
\put(15,25){$\Omega_2$}
\put(50,20){\line(1,2){35}}
\put(50,20){\line(-1,2){35}}
\multiput(50,20)(0,7){12}{\line(0,1){5}}
\qbezier(50,30)(52,33)(54,28)
\put(52,35){$\theta$}
\end{picture}
\end{center}
\caption{A broken line $\Sigma$ with angle $\theta\in(0,\pi/2)$,
which splits $\dR^2$ into two wedge-type domains $\Omega_1$ and $\Omega_2$.}
\label{fig:broken}
\end{figure}
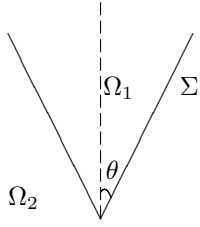
Moreover, we assume that $\beta = 4 / \alpha > 0$ is constant. Then
\begin{align*}
 \sess(-\Delta_{\delta,\alpha}) = \big[-\alpha^2/4,+\infty\big) = \big[-4/\beta^2,+\infty) = \sess(-\Delta_{\delta',\beta}),
\end{align*}
see~\cite[Proposition 5.4]{EN03} and~\cite[Corollary~4.11]{BEL13}, and the discrete spectra of both operators are nonempty, see~\cite[Theorem 5.2]{EI01} and~\cite[Corollary~4.12]{BEL13}.
 
We are going to apply~Proposition~\ref{prop:halbesTheorem}. Let $\mu < -\alpha^2/4$ and~$u \in W_\mu$, see~\eqref{Wmu}. By Proposition~\ref{thm:dom}\,(ii) we have
\begin{equation}
\label{bc1}
 u_1|_{\Sigma} - u_2|_{\Sigma} = 
(4/\alpha) \partial_{\nu_1}u_1|_{\Sigma} = -(4/\alpha) \partial_{\nu_2}u_2|_{\Sigma},
\end{equation}
and from Proposition~\ref{thm:dom}\,(i) and Lemma~\ref{lem:weakjump} we obtain
\begin{equation}\label{bc2}
 u_1|_{\Sigma} + u_2|_{\Sigma} = 0 \quad \text{and} \quad \partial_{\nu_1}u_1|_{\Sigma} - \partial_{\nu_2}u_2|_{\Sigma} = \alpha u_1|_{\Sigma}.
\end{equation}
Combining~\eqref{bc1} and~\eqref{bc2} yields
\begin{equation}\label{bc3}
 \partial_{\nu_j}u_j|_{\Sigma} = (\alpha/2)u_j|_{\Sigma},\qquad j = 1,2.
\end{equation}
It was shown in~\cite[Lemma~2.8]{LP08} that the bottom of the spectrum of the self-adjoint Laplacian in~$L^2 (\Omega_2)$ subject to the Robin boundary condition~\eqref{bc3} equals~$- \alpha^2 / 4$. Since $- \Delta u_2 = \mu u_2$ on~$\Omega_2$ and $\mu < - \alpha^2/4$, it follows $u_2 = 0$ identically. Plugging this into~\eqref{bc2} implies $u_1|_{\Sigma} = 0$. Recall that $\mu < -\alpha^2/4$ and that the function $u_1$ satisfies $-\Delta u_1 = \mu u_1$ in $\Omega_1$. Since the self-adjoint Dirichlet Laplacian on $\Omega_1$ is non-negative, we get $u_1 = 0$ identically as well, hence $u = 0$. Thus it follows from Proposition~\ref{prop:halbesTheorem} that
\begin{align*}
 \lambda_n (- \Delta_{\delta', \beta}) < \lambda_n (- \Delta_{\delta, \alpha})
\end{align*}
holds for all $n \in \dN$ such that $\lambda_n (- \Delta_{\delta, \alpha}) < - \alpha^2/4$.
\end{example}

\begin{example}
Another example of a similar flavour is given by the cone
\begin{equation}
\label{Sigma2}
 \Sigma := \big\{(x,y, \cot(\theta)\sqrt{x^2+y^2})\in\dR^3 \colon (x,y)\in\dR^2\big\},
\qquad \theta\in (0,\pi/2);
\end{equation}
cf.~Figure~\ref{fig:cone}.
\begin{figure}[h]
\vspace{2ex}
\begin{center}
\begin{picture}(120,100)
\includegraphics[scale = 0.4]{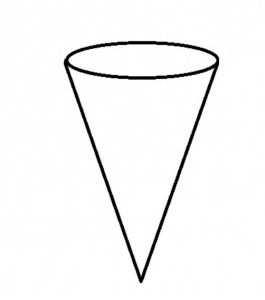}
\put(-12,65){$\Sigma$}
\multiput(-37,10)(0,7){14}{\line(0,1){5}}
\qbezier(-36.5,30)(-32,33)(-29,28)
\put(-33.5,37){$\theta$}
\end{picture}
\end{center}
\caption{An infinite cone $\Sigma$ with angle $\theta\in(0,\pi/2)$.}
\end{figure}
For constant $\alpha > 0$ it was shown in~\cite[Theorem~2.1]{BEL14} that
$\sess(-\Delta_{\delta,\alpha}) = [-\alpha^2/4,+\infty)$,
and the discrete spectrum of $-\Delta_{\delta,\alpha}$ was proved in~\cite[Theorem~3.2]{BEL14}
to be nonempty and even infinite. Following the lines of Example~\ref{example:broken} and referring to~\cite[Example~2.9]{LP08} instead of~\cite[Lemma~2.8]{LP08} it follows for constant $\beta = 4 / \alpha > 0$
\begin{align}\label{eq:IneqCone}
 \lambda_n(-\Delta_{\delta', \beta}) < \lambda_n(-\Delta_{\delta,\alpha})
\end{align}
for all $n \in \dN$ such that $\lambda_n (- \Delta_{\delta, \alpha}) < \inf \sess (- \Delta_{\delta', \beta})$.\footnote{In fact we expect that one can prove $\inf\sess(-\Delta_{\delta',\beta}) = -4/\beta^2$ using the arguments in the proof of~\cite[Theorem 2.1]{BEL14}. This would imply that~\eqref{eq:IneqCone} holds for all $n \in \dN$.}
\label{fig:cone}
\end{example}

\begin{example}\label{example:unconnected}
In this example we consider an unconnected hypersurface~$\Sigma$. Let $\dR^d_\pm := \{(x',x_d)\colon x'\in\dR^{d-1}, x_d \in\dR_\pm\}$, let $\Omega'\subset\dR^d_+$ be a bounded Lipschitz domain with positive distance to $\dR^d_-$ and let
\begin{equation}\label{Sigma3}
 \Sigma := \{(x',0)\colon x'\in\dR^{d-1}\}\cup\partial\Omega'.
\end{equation}
The surface~$\Sigma$ splits $\dR^d$ into the two Lipschitz domains
\begin{align*}
 \Omega_1 = \Omega' \cup \dR^d_- \quad \text{and} \quad \Omega_2 = \dR^d_+ \setminus \overline{\Omega'};
\end{align*}
cf. Figure~\ref{hyphyp}.
\begin{figure}[h]
\begin{center}
\begin{picture}(200,120)
\qbezier(60,85)(85,115)(110,90)
\qbezier(110,90)(125,70)(100,60)
\qbezier(100,60)(85,50)(75,70)
\qbezier(75,70)(70,75)(65,75)
\qbezier(60,85)(55,77.5)(65,75)
\put(85,78){$\Omega_1$}
\put(150,70){$\Omega_2$}
\qbezier(0,45)(100,45)(200,45)
\put(100,18){$\Omega_1$}
\put(30,35){$\Sigma$}
\put(49,75){$\Sigma$}
\qbezier(0,0)(0,60)(0,120)
\qbezier(200,0)(200,60)(200,120)
\qbezier(0,0)(100,00)(200,0)
\qbezier(0,120)(100,120)(200,120)
\put(2,110){$\dR^d$}
\end{picture}
\end{center}
\caption{The unconnected hypersurface~$\Sigma$ splits $\dR^d$ into two domains $\Omega_1$ and~$\Omega_2$, and $\Omega_1$ consists of two connected components.}
\label{hyphyp}
\end{figure}
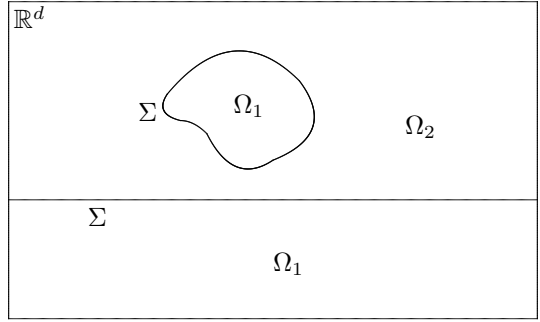
As in the previous examples we consider constant interaction strengths $\alpha, \beta$ with $\beta = 4 / \alpha > 0$. According to~\cite[Corollary~4.9]{BEL13} for constants $\alpha,\beta >0$ we have
\begin{align*}
 \sess(-\Delta_{\delta,\alpha}) = \big[-\alpha^2/4,+\infty \big) = \big[ -4/\beta^2,+\infty \big) = \sess(-\Delta_{\delta',\beta}).
\end{align*}
We are going to conclude from Proposition~\ref{prop:halbesTheorem} that
\begin{align}\label{eq:IneqDiscon}
 \lambda_n(-\Delta_{\delta', \beta}) < \lambda_n(-\Delta_{\delta,\alpha})
\end{align}
holds for all $n \in \dN$ such that $\lambda_n (- \Delta_{\delta, \alpha}) < - \alpha^2 / 4$. In order to do so, let $\mu < -\alpha^2/4$ and~$u \in W_\mu$ with $W_\mu$ as in \eqref{Wmu}. As in Example~\ref{example:broken} we find that $u$ satisfies the conditions~\eqref{bc1},~\eqref{bc2}, and~\eqref{bc3}. Since the spectrum of the self-adjoint Laplacian on $\dR^d_-$ satisfying the Robin boundary condition~\eqref{bc3} equals~$[-\alpha^2/4,+\infty)$, we conclude from~\eqref{bc3} and $- \Delta u_1 = \mu u_1$ that $u_1 |_{\dR^d_-}= 0$ identically. Together with~\eqref{bc2} and a unique continuation argument it follows as in the proof of Theorem~A that $u_2 = 0$ identically on $\Omega_2$. Finally, after another application of~\eqref{bc3} and of the unique continuation principle we arrive at $u_1 |_{\Omega'} = 0$, hence $u = 0$. Therefore Proposition~\ref{prop:halbesTheorem} yields the eigenvalue inequality~\eqref{eq:IneqDiscon}.
\end{example}
%

\end{document}